\documentclass[11pt]{amsart}
\usepackage{amscd,amsfonts,amssymb,amsmath}
\usepackage[margin=3.2 cm]{geometry}
\usepackage{amsmath}
\usepackage{color}

\newtheorem{theorem}{Theorem}[section]
\newtheorem{cor}[theorem]{Corollary}
\newtheorem{lemma}[theorem]{Lemma}

\theoremstyle{definition}
\newtheorem{definition}[theorem]{Definition}
\newtheorem{example}[theorem]{Example}

\theoremstyle{remark}

\numberwithin{equation}{subsection}
\theoremstyle{plain}

\newtheorem{question}{Question}

\def\Z{\mathbb Z}

\def \P {\mathcal P}
\def \pw{{\rm pw}}
\def \F {F_n}

\newcommand{\secref}[1]{Section~\ref{#1}}
\newcommand{\thmref}[1]{Theorem~\ref{#1}}
\newcommand{\lemref}[1]{Lemma~\ref{#1}}

\newcommand{\corref}[1]{Corollary~\ref{#1}}

\numberwithin{equation}{section}

\begin{document}

\title[On  Palindromic Widths of Nilpotent and Wreathe Products]{On  Palindromic Widths of Nilpotent and Wreathe Products}

\author{Valeriy ~G.~Bardakov}
\address{Sobolev Institute of Mathematics, Novosibirsk State University, Novosibirsk 630090, Russia
and Laboratory of Quantum Topology, Chelyabinsk State University, Brat'ev Kashirinykh street 129, Chelyabinsk 454001, Russia}
\email{bardakov@math.nsc.ru}
\author{Oleg ~V.~Bryukhanov}
\address{Siberian University of Consumer Cooperatives, Novosibirsk 630087, Russia}
\email{bryuoleg@ngs.ru}
\author{Krishnendu Gongopadhyay}
\address{Department of Mathematical Sciences, Indian Institute of Science Education and Research (IISER) Mohali,
Knowledge City, Sector 81, S.A.S. Nagar, P.O. Manauli 140306, India}
\email{krishnendu@iisermohali.ac.in, krishnendug@gmail.com}
\subjclass[2000]{Primary 20F16; Secondary 20F65, 20F19, 20E22}
\keywords{palindromic width;  commutator width;  wreath products; nilpotent product}

\thanks{The authors gratefully acknowledge the support of the Indo-Russian DST-RFBR project grant DST/INT/RFBR/P-137}
\thanks{Bardakov is partially supported by Laboratory of Quantum Topology of Chelyabinsk State University (Russian Federation government grant 14.Z50.31.0020) }

\date{\today}


\begin{abstract}
We  prove  that nilpotent product of a set groups $A_{1},\dots, A_{s}$ has finite palindromic width if and only if  palindromic widths of $A_{i}, i=1,\dots, s,$ are finite. We give a new proof that the commutator width of $F_n \wr K$ is infinite, where $F_n$ is a free group of rank $n \geq 2$ and $K$ a finite group. This result, combining with a result of Fink \cite{f1} gives examples of groups with infinite commutator width but finite palindromic width with respect to some generating set.
\end{abstract}
\maketitle

\section{Introduction}
Let $G$ be a group with a set of generators $X$. A reduced word in the alphabet $X^{\pm 1}$
is a \emph{palindrome} if it reads the same forwards and backwards.
The palindromic length $l_{\mathcal P}(g)$
of an element $g$ in $G$ is the minimum number $k$ such that $g$ can be expressed as a product of $k$
palindromes. The \emph{palindromic width} of $G$ with respect to $X$ is defined to
be ${\rm pw}(G, X) = \underset{g \in G}{\sup} \ l_{\mathcal{P}}(g)$. When there is no confusion about the underlying generating set $X$, we simply denote the palindromic width with respect to $X$ by ${\rm pw}(G)$. Palindromic width of free groups was investigated by Bardakov, Shpilrain and Tolstykh \cite{BST} who proved that the palindromic width of a non-abelian free group is infinite. This result was generalized by Bardakov and Tolstykh \cite{BT} who proved that almost all free products have infinite palindromic width; the only exception is given by the free product of two cyclic groups of order two, when the palindromic width is two. Piggot \cite{P} studied the relationship between primitive words and palindromes in free groups of rank two.  Recently, there have been a series of work that aim to understand palindromic widths in several other classes of groups.  In a series of papers \cite{BG, BG2, BG3} the authors have proved finiteness of palindromic widths of finitely generated free nilpotent groups and certain solvable groups. Some bounds of the widths were also obtained in many cases.  Palindromic widths of wreath products and Grigorchuk groups have been investigated by
Fink \cite{f1, f3}. Riley and Sale have investigated palindromic widths in certain wreath products and solvable groups \cite{RS} using finitely supported functions from $\Z^r$ to the given group. Fink and Thom \cite{f2} have studied palindromic widths in simple groups.

 \medskip For $g$, $h$ in $G$, the \emph{commutator} of $g$ and $h$ is defined as $[g,
h]=g^{-1} h^{-1}gh$. If $\mathcal{C}$ is the set of commutators in some group
$G$ then the commutator  subgroup $G'$ is generated by $\mathcal{C}$.
The \emph{commutator length} $l_{\mathcal{C}}(g)$ of an element $g \in G'$ is the minimal number $k$ such that $g$ can be expressed as a product of $k$ commutators.  The \emph{commutator width} of $G$ is defined by $\underset{g \in G'}{\sup} \ l_{\mathcal{\mathcal C}}(g)$ and is
denoted by ${\rm cw}(G)$. Commutator width in groups have been studied by several authors, for eg. see \cite{AM, B, R}.  It is well known \cite{R} that the commutator width of a
free non-abelian group is infinite, but the commutator width of a finitely
generated nilpotent group is finite. An algorithm of the
computation of the commutator length in free non-abelian groups can be found in \cite{B}.

\medskip In this paper, we address two problems related to palindromic widths in groups. First,  we investigate the palindromic widths of nilpotent products of groups. Bardakov and Gongopadhyay \cite{BG, BG2} proved that the palindromic width of a free nilpotent group is finite. We extend this result for nilpotent products of groups.  Recall that the concept of nilpotent products arises from the work of Golovin \cite{G}, also see \cite{G1, G2}. The construction of nilpotent products also follow from the construction of so called verbal  products of groups, see Moran  \cite{SM1,SM2,SM3}.  This construction appeared to answer a question by Kurosh who asked whether there are any other products, other than the free and the direct products of groups, which also have the following properties:
\begin{enumerate}
\item[(a)] the products are commutative;
\item[(b)] the products are associative;
\item[(c)] there are generating subgroups of the products, that is, given a product $G$, it has a subgroup $S$ such that $G=\langle S \rangle$.
\item[(d)] the intersection of a given one of these subgroups with  the normal subgroups generated  by the rest of these  subgroups is the identity.
\end{enumerate}
The construction of the nilpotent products generalize the free and direct products of groups. Every nilpotent group is a quotient of a nilpotent product. In this paper we prove that any nilpotent product of set of groups $A_{1} = \langle X_1 \rangle, \dots, A_{s} = \langle X_s \rangle$ has finite palindromic width with respect to the generating subset $\bigcup\limits_{i=1}^{s}X_{i}$ if and only if ${\rm pw}(A_{i}, X_{i}), i=1,\dots, s,$ are finite. We prove this result in \secref{np}.

\medskip The later part of the paper addresses relationship between commutator and palindromic widths. In \cite[Problem 2]{BG} we ask for the relationship between commutator and palindromic widths of groups and  provided some relationship between them in \cite{BG3}. We know that the palindromic and commutator widths of the free non-abelian group $F_n = \langle X \rangle$ is infinite. On the other side, Akhavan-Malayeri \cite{AM} proved that the commutator width of wreath product $F_n \wr \mathbb{Z}^m$ is finite. Analogous result for the palindromic width has been proved by  Fink \cite{f1}.
Recently, Fink and Thom \cite{f2} have shown that there exists finitely generated simple groups having infinite commutator width but finite palindromic width with respect to some generating set.

On the other hand, for $K$ a finite group,  Fink \cite{f1} has proved that there exists a generating set $S$ such that $\pw(F_n \wr K, S)$ is finite.  We show that for $K$ a finite group, the commutator width of $F_n \wr K$, $n \geq 2$, is infinite.  This result is already known from the work of Nikolov \cite{n}. However, we give a different proof of this result using  standard ideas  of constructing a quasi-homomorphism, for eg. see \cite{R, BST}.  Thus $F_n \wr K$ provides first example of  a non-simple group that has infinite commutator width but finite palindromic width with respect to some generating set.

For completeness of this paper,  we demonstrate Fink's ideas by considering the simple example of $F_2 \wr S_3$  where $S_3$ is the symmetric group of three letters.  Using Fink's ideas we show that the palindromic width of this group is at most 20 with respect to the canonical set of generators. This is actually an improvement of the bound of Fink that turns out to be 40 in this case. In \cite{BG3}, we proved that the palindromic width of finite extension of a group with finite palindromic width is finite. But here we see that $F_2 \wr S_3$ is a finite extension of the group $F_2^6$ that has infinite palindromic width. So, this gives example of a group that has infinite palindromic width but finite extension has finite palindromic width.

\section{Two approaches to Palindromes}
There are two notions of palindromic words in a group that have been implicit in recent literature. In the following we compare these notions.

\medskip {\bf First notion. } Let $A$ be an alphabet. A \emph{word} in these alphabets is a sequence of letters $u=a_1 a_2 \dots a_k, ~ a_i \in A$. An empty word is denoted by 1. A word $u=a_1 a_2 \dots a_k$ is equal to a word $v=b_1 b_2 \dots b_l$ if $k=l$ and $a_1=b_1, ~ a_{2}=b_{2}, \dots, a_k=b_k$. Let $\bar u$ denote the reverse word $\bar u= a_k a_{k-1} \dots a_1$. We say that $u$ is a \emph{palindrome} with respect to $A$ if $u=\bar u$, i.e. $a_1=a_{k}, a_2=a_{k-1}, \dots, a_i=a_{k-i+1}, \dots$.

Let $G$ be a group with a generating set $A$. We assume that $A$ is symmetric, i.e. $A=A^{-1}$. Let $A^{\ast}$ be the free monoid over the alphabet $A$. There is a homomorphism $\rho: A^{\ast} \to G$ that sends every word $a_1 a_2 \dots a_k$ to some element in $G$. Evidently, for an element $g \in G$, there are a lot of words $u$ in $A^{\ast}$ such that $\rho(u)=g$; we denote this set of elements by $\rho^{-1}(g)$. An element $g$ in $G$ is a \emph{palindrome} (or, \emph{word palindrome}) with respect to $A$ if there is a palindrome in the set $\rho^{-1} (g) \subset A^{\ast}$. In this paper we shall follow this notion.

\medskip {\bf Second notion.} This notion has been used by Fink \cite{f1}. We call an element $g$ in $G$ a \emph{group palindrome} if it is represented by a word $g=a_1 a_2 \dots a_k$, $a_i \in A$, such that $\bar g$ represent the same element in $G$. Evidently if $g$ is a word palindrome then it is a group palindrome. But the converse is not true.
  In \cite[Lemma 3.1]{f1} Fink has implicitly used this notion to assert that $g=\bar g$ in any Abelian group. Using the first notion, it is not true that $g=\bar g$ in an Abelian group, see the following example.
\begin{example}
Let $G=\Z^{\oplus n}$ be the free abelian group of rank $n$. Let $\{a_1, a_2, \dots, a_n \}$ be a basis of $G$. Using the first definition we proved that $\pw(G, A)=n$, see \cite{BG}. Using the second definition we see that every element of $g$ is a palindrome, since $a_1^{\alpha_1} \dots a_n ^{\alpha_n}=a_n^{\alpha_n} \dots a_1^{\alpha_1}$ and hence for any $g=a_1^{\alpha_1} \dots a_n ^{\alpha_n}$ in $G$ we have $g =\bar g$. Hence by the second definition palindromic width of $G$ is $1$.
\end{example}

If we denote by $\P_G$ the set of group palindromes and $\P_A$ the set of word palindromes in $A$, then $\P_A \subset \P_G$. Also, the palindromic width with respect to the group palindromes does not exceed $\pw(G, A)$. If $F$ is a free group with basis $X$, then $\P_X=\P_F$.

It would be interesting to compare the results of Fink \cite{f1, f2}  from the above point of views.

\section{Palindromic width of nilpotent product of groups }\label{np}

 Bardakov and Gongopadhyay \cite{BG, BG2} investigated the palindromic width of the free nilpotent groups. In this section the palindromic width of nilpotent product of groups is investigated.  Recall the construction of the nilpotent product of groups. This construction was defined in the paper of Golovin \cite{G}.

Let $A\ast B$ be the free product of some groups $A$ and $B$. Cartesian subgroup  and lower central series  of $A\ast B$ are denoted  by $[A, B]$ and $\gamma_{n}(A\ast B), n=1, 2,\dots,$ respectively. 
The $n$-th nilpotent product $G_n=A(n)B,$ $n\geq2,$ is defined as the quotient \hbox{$A\ast B/[A, B]\cap\gamma_{n+1}(A\ast B)$}. It is clear that $A(1)B=A\times B$ is the direct sum.

\medskip Let us list some common properties of the nilpotent product $G_n$ from \cite{G}.
\begin{enumerate}
\item $A,B\leq G_n,$ $ A^{G_n}\cap B=1,$ $ A\cap B^{G_n}=1$ and $G_n=\langle A, B\rangle$.
\item Any element $g\in G_n$ can be  uniquely written as a product $a \cdot b \cdot w(g)$,  where $a\in A,$ $b\in B,$ $w(g)\in[A, B]$.
\item Given normal subgroups $A_{0}\unlhd A$, $B_{0}\unlhd B$ so that $A_{0}, B_{0}\unlhd G_n$, let $\overline A=A/A_0$, $\overline B=B/B_0$. Then the group homomorphisms $A\rightarrow A/A_{0}$ and $B\rightarrow B/B_{0}$ are extended to the homomorphism of  the nilpotent products \hbox{$\Phi: G_n\rightarrow {\overline A}(n) \overline B$} with $\mathrm{Ker }~\Phi=A_{0}B_{0}$ where $A_{0}B_{0}\cap[A, B]=1$.
\item $[g_{1}, g_{2},\dots, g_{n+1}]=1$ where $g_{i}\in A \cup B$.
\item $(A(n)B)(n)C=A(n)(B(n)C)=(A(n)C)(n)B$.
\end{enumerate}

\medskip For subsets $X$,  $Y$ in $G_n$, let $C_{X}(Y)$ denote the centralizer of $Y$ in $X$.
\begin{lemma}\label{npl}
Given nilpotent product $G_n$, the following holds.
\begin{itemize}
\item[(i)] $\gamma_{n}(A)\leq C_{A}(B)$, $\gamma_{n}(B)\leq C_{B}(A)$.
\item[(ii)] $C_{A}(B), C_{B}(A)\unlhd G_n$.
\item[(iii)] $C_{A}(B)C_{B}(A)\cap[A, B]=1$.
\end{itemize}\end{lemma}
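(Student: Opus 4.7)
The plan is to handle the three parts in the order~(i), (iii), (ii); parts~(i) and~(iii) follow directly from properties~(4) and~(2), respectively, while (ii) is the main obstacle.

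For~(i), I would take $g\in \gamma_n(A)$, write it as a product of $n$-fold left-normed commutators $[a_{i,1},\dots,a_{i,n}]$ in elements of $A$, and apply the identity $[xy,b]=[x,b]^y[y,b]$ iteratively to reduce $[g,b]$ to a product of conjugates of commutators of the form $[a_{i,1},\dots,a_{i,n},b]$. Each of these is an $(n+1)$-fold left-normed commutator whose entries lie in $A\cup B$ and involve both factors, hence trivial by property~(4). This gives $\gamma_n(A)\leq C_A(B)$, and the symmetric argument yields $\gamma_n(B)\leq C_B(A)$. For~(iii), property~(2) assigns every $g\in G_n$ a unique decomposition $g=a\cdot b\cdot w$ with $a\in A$, $b\in B$, $w\in[A,B]$, which forces $A\cdot B\cap[A,B]=\{1\}$; since $C_A(B)\leq A$ and $C_B(A)\leq B$, we immediately obtain $C_A(B)C_B(A)\cap[A,B]\subseteq AB\cap[A,B]=\{1\}$.

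For~(ii), since $G_n=\langle A,B\rangle$, it suffices to show that $C_A(B)$ is normalized by each of $A$ and $B$. Normalization by $B$ is immediate: every $c\in C_A(B)$ commutes with every $b\in B$, so $b^{-1}cb=c$. For normalization by $a\in A$, the conjugate $a^{-1}ca=c\cdot[c,a]$ already lies in $A$, and the condition $a^{-1}ca\in C_A(B)$ reduces, via $[xy,b]=[x,b]^y[y,b]$ together with $[c,b]=1$, to the mixed triple commutator identity $[[c,a],b]=1$ in $G_n$ for every $b\in B$.

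My plan for this key step is to apply the Hall--Witt identity to $(c,a^{-1},b)$, use $[b,c^{\pm 1}]=1$ (which holds since $c\in C_A(B)$) to kill one of the three Hall--Witt terms, and thereby reduce the problem to showing that $c$ commutes with $[a^{-1},b^{-1}]$; equivalently, that $C_A(B)$ centralizes the Cartesian subgroup $[A,B]$. The main obstacle will be to close this final commutation; I expect to handle it by refining part~(i) to an equality $C_A(B)=\gamma_n(A)$ through a free Lie algebra analysis of $A\ast B$: an element $c\in A$ whose commutator with every element of $B$ lies in $\gamma_{n+1}(A\ast B)$ is forced to have weight at least $n$ in the associated graded Lie algebra, so $c\in\gamma_n(A)$ by a Magnus-type theorem. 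Once $c\in\gamma_n(A)$, the bracket $[c,[a,b]]$ automatically lies in $[A,B]\cap\gamma_{n+1}(A\ast B)$ and vanishes in $G_n$. A symmetric argument delivers $C_B(A)\unlhd G_n$.
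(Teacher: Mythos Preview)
Your treatment of (i) and (iii) is fine. For (iii) you actually take a cleaner route than the paper: you read the intersection off the unique normal form in property~(2), whereas the paper first establishes (ii) and then invokes property~(3). Nothing to fix there.

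The gap is in (ii). Your entire argument hinges on upgrading (i) to the \emph{equality} $C_A(B)=\gamma_n(A)$, and the tools you name (free Lie algebra analysis, a Magnus-type theorem) are tools for free groups. Here $A$ and $B$ are arbitrary, so the associated graded Lie ring of $A\ast B$ is not a free Lie algebra, and there is no Magnus theorem telling you that $[c,b]\in\gamma_{n+1}(A\ast B)$ for all $b$ forces $c\in\gamma_n(A)$. You would need to prove that for $\bar c\neq 0$ in $\gamma_k(A)/\gamma_{k+1}(A)$ and $\bar b\neq 0$ in $B^{ab}$ the Lie bracket $[\bar c,\bar b]$ is nonzero in $\gamma_{k+1}(A\ast B)/\gamma_{k+2}(A\ast B)$; this is a nontrivial structural statement about the lower central series of an arbitrary free product, and your proposal does not supply it. (Note also that it visibly fails in degenerate cases such as $B=1$, so at minimum some hypothesis on $B$ would have to be isolated.) Without this equality your Hall--Witt reduction stalls at ``$C_A(B)$ centralizes $[A,B]$'', which is exactly as hard as what you are trying to prove.

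The paper bypasses all of this with a two-line argument you should adopt: any $\varphi\in\mathrm{Aut}(A)$, paired with $\mathrm{id}_B$, extends to an automorphism $\zeta$ of $A\ast B$; since $\zeta$ preserves both $[A,B]$ and $\gamma_{n+1}(A\ast B)$, it preserves their intersection, hence descends to $G_n$ and sends $C_A(B)$ into itself. Thus $C_A(B)$ is characteristic in $A$, so normal in $A$; since $B$ centralizes $C_A(B)$ by definition, $C_A(B)\unlhd\langle A,B\rangle=G_n$. No commutator calculus, no Lie algebras, and no hypothesis on $A$ or $B$.
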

\begin{proof}
(i) It follows from the inclusions $[\gamma_{n}(A), B], [\gamma_{n}(B), A]\subseteq [A, B]\cap\gamma_{n+1}(A\ast B)=1.$

(ii) Let us prove    that the subgroups $C_{A}(B)$ and $C_{B}(A)$ are characteristic subgroups in $A$ and $B$ accordingly. Note that any   pair  of automorphisms $\varphi\in\mathrm{Aut}A$ and $\psi\in\mathrm{Aut}B$ induce the automorphism $\zeta\in\mathrm{Aut}(A\ast B)$. Given commutator
$[a, b]\in[A,B]\cap\gamma_{n+1}(A\ast B)$ we have,   $[a^{\varphi}, b^{\psi}]=[a, b]^{\zeta}\in([A,B]\cap\gamma_{n+1}(A\ast B))^{\zeta}=[A,B]\cap\gamma_{n+1}(A\ast B)$. It follows that the subgroups $C_{A}(B)$ and $C_{B}(A)$ are characteristic subgroups in $A$ and $B$ accordingly and so $C_{A}(B)\unlhd A$ and $C_{B}(A)\unlhd B$. This implies that $C_{A}(B), C_{B}(A)\unlhd G_n$.

(iii) This statement is clear following the  proved statement (ii) and property $(3)$ of the list above.
\end{proof}

It is clear that groups $\overline{A}=A/C_{A}(B)$, $\overline{B}=B/C_{B}(A)$ are nilpotent of step $\leq n-1$ and nilpotent product ${\overline G}_n=\overline{A}(n)\overline{B}$ is nilpotent of step $\leq n$. Let $\overline{X}$ and $\overline{Y}$ be generating sets of groups $\overline{A}$ and $\overline{B}$ with $|\overline{X}|=m_{A}$, $|\overline{Y}|=m_{B}$.

\begin{theorem}\label{Wnp}
Let $A$ and $B$ are finitely generated groups with generating set $X$ and $Y$ respectively. Given the nilpotent product $G_n = A(n)B$, the following holds.
\begin{itemize}

\item[(i)] {$\max\{{\rm pw}(A, X), {\rm pw}(B, Y)\}\leq {\rm pw}(G_n, X\cup Y)\leq {\rm pw}(A, X)+{\rm pw}(B, Y)+3(m_{A}+m_{B})$.}

\medskip \item[(ii)] If $A=C_{A}(B)$ or $B=C_{B}(A)$ then
$$\max\{{\rm pw}(A, X), {\rm pw}(B, Y)\}\leq {\rm pw}(G_n, X\cup Y)\leq {\rm pw}(A, X)+{\rm pw}(B, Y).$$
\end{itemize} \end{theorem}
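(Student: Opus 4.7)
The plan splits along the lower bound, the upper bound in (i), and the special case (ii); the commutator remainder is the heart of the argument. For the lower bound I would use the retractions $\pi_A \colon G_n \to A$ and $\pi_B \colon G_n \to B$, which descend from the retractions of $A \ast B$ that kill one factor because the cartesian subgroup $[A,B]$ lies in $\langle A \rangle^{A \ast B} \cap \langle B \rangle^{A \ast B}$, so the defining subgroup $[A,B] \cap \gamma_{n+1}(A \ast B)$ of $G_n$ does too. At the word level $\pi_A$ deletes every $Y^{\pm 1}$-letter from a word in $(X \cup Y)^{\pm 1}$, and such letterwise deletion preserves the palindrome property of each factor. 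Hence any expression of $a \in A$ as $k$ palindromes in $G_n$ yields an expression as $k$ palindromes in $A$, giving ${\rm pw}(A, X) \le {\rm pw}(G_n, X \cup Y)$, and symmetrically for $B$.

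For the upper bound in (i), I would use the canonical decomposition from property (2): every $g \in G_n$ writes uniquely as $g = a \cdot b \cdot w(g)$ with $a \in A$, $b \in B$, $w(g) \in [A, B]$. To handle $w(g)$ I would pass to the nilpotent quotient $\bar G_n = \bar A(n)\bar B$ via $\Phi$ from property (3). By \lemref{npl}(iii), $\ker\Phi = C_A(B) \cdot C_B(A)$ meets $[A, B]$ trivially, so $\Phi$ restricts to an injection on $[A, B]$. A palindromic factorisation $\Phi(w(g)) = \bar q_1 \cdots \bar q_k$ over $\bar X \cup \bar Y$ lifts letterwise to a product $q_1 \cdots q_k$ of palindromes in $G_n$ whose canonical form must read $q_1 \cdots q_k = a_0 \, b_0 \, w(g)$ with $a_0 \in C_A(B) \subseteq A$, $b_0 \in C_B(A) \subseteq B$, by uniqueness in property (2) and the injectivity of $\Phi$ on $[A,B]$. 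Because $a_0$ commutes with $B$ and $b_0$ commutes with $A$, one rearranges $g = (a a_0^{-1})(b b_0^{-1}) q_1 \cdots q_k$ with $a a_0^{-1} \in A$ and $b b_0^{-1} \in B$, yielding a total bound of ${\rm pw}(A, X) + {\rm pw}(B, Y) + k$. It remains to show $k \le 3(m_A + m_B)$ for any element of $[\bar A, \bar B]$. Here $\bar G_n$ is nilpotent of class $\le n$ on $m_A + m_B$ generators, and a commutator-collection argument combined with explicit palindromic identities—for instance $(x y^k x)(x^{-2})(y^{-k}) = [x, y]^k$ in the class-$2$ quotient, realising a power of a basic commutator as exactly three palindromes—shows that every element of $[\bar A, \bar B]$ is a product of at most three palindromes per generator of $\bar X \cup \bar Y$.

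For part (ii), the hypothesis $A = C_A(B)$ (or $B = C_B(A)$) forces every element of $A$ to commute with every element of $B$ in $G_n$, so $[A, B] = 1$, whence $G_n = A \times B$ and $w(g) = 1$ for every $g$; the stated bound follows by concatenating palindromic factorisations of $a$ and $b$.

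The hard step is the uniform bound of three palindromes per generator of $\bar X \cup \bar Y$ on the palindromic length in $[\bar A, \bar B]$, independent of the nilpotency class $n$: as $n$ grows, $[\bar A, \bar B]$ acquires basic commutators of ever higher weight and the standard Hall collection process introduces many correction terms, so the palindromic identities must be arranged to absorb all these corrections within a fixed budget, exploiting property (4)—that $(n+1)$-fold commutators of elements of $A \cup B$ vanish—to truncate the collection at weight $n$.
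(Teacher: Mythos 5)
Your lower bound and part (ii) are correct and essentially agree with the paper (the paper obtains the lower bound by passing to the quotient $G_n \to A\times B$ and then to $A$ and $B$, rather than by a word-level retraction, but both arguments are sound). The problem is the upper bound in (i): you have correctly isolated the quantitative claim $k \leq 3(m_A+m_B)$ as the crux, but you do not prove it. The class-$2$ identity $(xy^kx)x^{-2}y^{-k}=[x,y]^k$ plus an unspecified arrangement of Hall collection absorbing all correction terms "within a fixed budget" is not an argument for general nilpotency class $n$, and you say as much yourself. The paper does not attempt any such collection argument: it observes that $\overline{G}_n=\overline{A}(n)\overline{B}$ is a nilpotent group on $m_A+m_B$ generators, hence a homomorphic image of a free nilpotent group $N_{m,s}$ of rank $m=m_A+m_B$, and invokes \cite[Theorem 1.1]{BG} to conclude ${\rm pw}(\overline{G}_n,\overline{X}\cup\overline{Y})\leq {\rm pw}(N_{m,s})\leq 3(m_A+m_B)$. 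The entire "hard step" is thus outsourced to the earlier theorem on free nilpotent groups; no new palindromic identities are needed.

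A secondary point: routing the argument through the canonical form $g=a\cdot b\cdot w(g)$ and the injectivity of $\Phi$ on $[A,B]$ makes your task strictly harder than necessary, since you then need a palindromic-length bound for elements of the cartesian subgroup $[\overline{A},\overline{B}]$ specifically, which is not what the cited theorem supplies. The paper instead applies $\Phi$ to $g$ itself: write $\Phi(g)=\overline{p}_1\cdots\overline{p}_s$ with $s\leq {\rm pw}(\overline{G}_n)$, lift each $\overline{p}_i$ letterwise to a palindrome over the representatives $X_0\cup Y_0$, and note that the discrepancy $g\cdot(p_1\cdots p_s)^{-1}$ lies in $\ker\Phi=C_A(B)C_B(A)$, hence equals $ab$ with $a\in C_A(B)\leq A$ and $b\in C_B(A)\leq B$, contributing at most ${\rm pw}(A,X)+{\rm pw}(B,Y)$ further palindromes. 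Your rearrangement using that $a_0$ centralizes $B$ is correct as far as it goes, but without the $3(m_A+m_B)$ bound the proposal does not establish the theorem.
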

\begin{proof}  (i) \lemref{npl} and property $(3)$ imply   that there is a homomorphism $G_n\rightarrow\overline{G}_n$ with kernel $C_{A}(B)C_{B}(A)\unlhd G_n$ and $C_{A}(B)C_{B}(A)\cap[A, B]=1$. Let $X$ and $Y$ be generating sets of groups $A$ and $B$, and let $\overline{X}$ and $\overline{Y}$ be  the corresponding generating sets of groups $\overline{A}$ and $\overline{B}$.

Next, let $X_{0}\subseteq X$ and $Y_{0}\subseteq Y$ be sets of  representatives of elements of   $\overline{X}$ and $\overline{Y}$ respectively. Let $\overline{g}\in\overline{G}_n$ be homomorphic image of $g\in G_n$, then $\overline{g}=\overline{p}_{1}(\overline{X}, \overline{Y})\cdots\overline{p}_{s}(\overline{X}, \overline{Y})$ and $$g=\overline{p}_{1}(X_{0}, Y_{0})\cdots\overline{p}_{s}(X_{0}, Y_{0})\cdot ab$$ for some palindromes
$\overline{p}_{i}(\overline{X}, \overline{Y})$,
$a \in C_{A}(B)$, $b \in C_{B}(A)$.
Hence, $${\rm pw}(G_n, X\cup Y)\leq{ \rm pw}(A, X)+{\rm pw}(B, Y)+{\rm pw}(\overline{G}_n, \overline{X}\cup\overline{Y}).$$
The nilpotent group $\overline{G}_n$ is the homomorphic image of some free nilpotent group ${\rm N}_{m,s}$ with rank $m=m_{A}+m_{B}$ and step $s\geq n$. Therefore, ${\rm pw}(\overline{G}_n, \overline{X}\cup \overline{Y})\leq\rm{pw}(N_{m,s})\leq3(m_{A}+m_{B})$. The last inequality  follows from \cite[Theorem 1.1]{BG}. Further, $A\times B$ is homomorphic image of $G_n$ and groups $A$ and $B$ are homomorphic images of $A\times B$, so
$$\max\{{\rm pw}(A, X), {\rm pw}(B, Y)\}\leq{\rm pw}(A\times B, X\cup Y)\leq{\rm pw}(G_n, X\cup Y).$$
Finally, $$\max\{\rm{pw}(A, X), \rm{pw}(B, Y)\}\leq{ \rm pw}(G_n, X\cup Y)\leq\rm{pw}(A, X)+\rm{pw}(B, Y)+3(m_{A}+m_{B}).$$

(ii) If $A=C_{A}(B)$ or $B=C_{B}(A)$ then $G_n=A\times B$ and so $$\max\{{\rm pw}(A, X), {\rm pw}(B, Y)\}\leq{ \rm pw}(G_n, X\cup Y)\leq\rm{pw}(A, X)+\rm{pw}(B, Y).$$

This proves the lemma.
\end{proof}

Using properties (1) -- (5), one can define  nilpotent products $$(n)\{A_{1},\dots,A_{s}\}=(\dots(A_{1}(n)A_{2})(n)\dots)(n)A_{s}$$
of a set of groups $A_1, \ldots, A_s$ inductively that also preserve properties (1) -- (5), see \cite{G}. Let $A_{i}=\langle X_{i}\rangle$, $C_{i}=C_{A_{i}}( A_1 \cup \ldots \cup A_{i-1} \cup \widehat{A_{i}} \cup A_{i+1} \cup \ldots \cup A_s)$, $\overline{A}_{i}=A_{i}/C_{i}$,
$\overline{A}_{i}=\langle\overline{X}_{i}\rangle$, where $\overline{X}_{i}$ is the homomorphic image of $X_{i}$ and, $|\overline{X}_{i}|=m_{i}$ for all $i=1,\dots,s$. It is clear that $C_{i}=\bigcap\limits_{k\neq i}C_{A_{i}}(A_{k})$. Then, it is followed from Lemma \ref{npl} that $\gamma_{n}(A_{i})\leq C_{i}$ and $C_{i}\trianglelefteq(n)\{A_{1},\dots,A_{s}\}$. Hence, $(n)\{A_{1},\dots,A_{s}\}/C_{1}\cdots C_{s}=(n)\{A_{1}/C_{1},\dots,A_{s}/C_{s}\}$ is a nilpotent group with generating set $\bigcup\limits_{i=1}^{s}\overline{X}_{i}$ that consists $\sum\limits_{i=1}^{s}m_{i}$ elements. Finally, if $A_{k}=C_{k},$  then   $(n)\{A_{1},\dots,A_{s}\}= A_{k}\times(n)\{A_{1},\dots,\widehat{A_{k}},\dots,A_{s}\}$.  Further,
arguments same as in theorem \ref{Wnp}(i, ii) prove the followed statement.
\begin{theorem}\label{Wnps}
Given the nilpotent product $G_{n}=(n)\{A_{1},\dots,A_{s}\}$, the following holds.
\begin{itemize}
 \item[(i)]  $\max\limits_{i=1,\dots,s}\{{\rm pw}(A_{i}, X_{i})\}\leq
 {\rm pw}(G_{n},\bigcup\limits_{i=1}^{s}X_{i})\leq\sum_{i=1}^{s}
    {\rm pw}(A_{i}, X_{i})+3\sum_{i=1}^{s}m_{i}.$
    \item[(ii)] If $A_{k}=C_{k},$  $G_{n}(\widehat{A_{k}})=(n)\{A_{1},\dots,\widehat{A_{k}},\dots,A_{s}\}$ then
$$\max\{{\rm pw}(A_{k}, X_{k}), {\rm pw}(G_{n}(\widehat{A_{k}}), \bigcup\limits_{i\neq k}X_{i})\leq {\rm pw}(G_{n},\bigcup\limits_{i=1}^{s}X_{i})\leq {\rm pw}(A_{k}, X_{k})+{\rm pw}(G_{n}(\widehat{A_{k}}), \bigcup\limits_{i\neq k}X_{i}).$$
\end{itemize}
\end{theorem}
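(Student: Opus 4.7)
The plan is to mirror the argument of \thmref{Wnp} almost verbatim, with the two-factor homomorphism $G_n \to \overline{G}_n$ replaced by its $s$-factor analogue. First I would establish the lower bound in (i): by iterating property (3), the canonical map $G_n \to A_1 \times \cdots \times A_s$ is a surjection whose kernel is generated by the pairwise Cartesian subgroups $[A_i, A_j]$ in $G_n$, and each $A_i$ is in turn a retract of the direct product. Hence any palindromic decomposition in $G_n$ pushes down to one in $A_i$, yielding $\pw(A_i, X_i) \leq \pw(G_n, \bigcup X_i)$ for every $i$, which gives the desired left-hand inequality.

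For the upper bound in (i), the main ingredient is the homomorphism $\Phi : G_n \to \overline{G}_n = (n)\{A_1/C_1, \ldots, A_s/C_s\}$, constructed inductively via property (3), whose kernel is $C_1 C_2 \cdots C_s$. The quotient $\overline{G}_n$ is a finitely generated nilpotent group on $\sum m_i$ generators, hence a homomorphic image of the free nilpotent group of the same rank and of step $\geq n$; by \cite[Theorem~1.1]{BG} one obtains $\pw(\overline{G}_n, \bigcup \overline{X}_i) \leq 3\sum m_i$. Choose sets of representatives $X_{i,0} \subseteq X_i$ of $\overline{X}_i$ and, for $g \in G_n$, lift a palindromic decomposition of $\overline{g}$ palindrome-wise to obtain
$$g = \overline{p}_1(X_{1,0}, \ldots, X_{s,0}) \cdots \overline{p}_t(X_{1,0}, \ldots, X_{s,0}) \cdot c,$$
with $t \leq 3\sum m_i$ and $c \in C_1 \cdots C_s$. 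The definition $C_i = \bigcap_{k \neq i} C_{A_i}(A_k)$ forces $C_i$ to commute elementwise with $A_j$, and in particular with $C_j$, for every $j \neq i$; so $c$ factors as $c_1 \cdots c_s$ with $c_i \in C_i \leq A_i$. Finally, each $c_i$ is a product of at most $\pw(A_i, X_i)$ palindromes in $X_i$, which are palindromes in $\bigcup X_i$, so in total $g$ is expressed as a product of at most $\sum \pw(A_i, X_i) + 3\sum m_i$ palindromes.

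For part (ii), the discussion preceding the theorem gives the direct-product splitting $G_n = A_k \times G_n(\widehat{A_k})$ whenever $A_k = C_k$. A palindromic decomposition of $g = (a, g')$ can then be assembled by concatenating a decomposition of $a \in A_k$ over $X_k$ with one of $g' \in G_n(\widehat{A_k})$ over $\bigcup_{i \neq k} X_i$, proving the upper bound; the lower bound follows, as in (i), by projecting onto each factor.

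The step I expect to require the most care is the assertion that $C_i$ and $C_j$ commute in $G_n$ for $i \neq j$, which is what allows the kernel element $c$ to be written cleanly as $c_1 \cdots c_s$. This follows from the centralizer definition together with the inductive form of \lemref{npl}(ii), but confirming both that each $C_i$ is normal in the full product $G_n$ and that $C_1 \cdots C_s$ intersects the various $[A_i, A_j]$ trivially calls for a careful bookkeeping argument over the $s$-fold iteration; once this is in place, the rest of the proof is a routine adaptation of \thmref{Wnp}.
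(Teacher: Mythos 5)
Your proposal is correct and follows essentially the same route as the paper, which explicitly proves Theorem~\ref{Wnps} by the phrase ``arguments same as in theorem \ref{Wnp}(i, ii)'' after setting up exactly the ingredients you use: the quotient $(n)\{A_{1},\dots,A_{s}\}/C_{1}\cdots C_{s}=(n)\{A_{1}/C_{1},\dots,A_{s}/C_{s}\}$ with the $3\sum m_i$ bound from \cite[Theorem 1.1]{BG}, the normality of each $C_i$ (so that kernel elements factor as $c_1\cdots c_s$ without needing the pairwise commutation you worry about), and the splitting $G_n = A_k \times G_n(\widehat{A_k})$ when $A_k = C_k$. The step you flag as delicate is handled in the paper by the inductive preservation of properties (1)--(5) and Lemma~\ref{npl}, exactly as you anticipate.
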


\medskip The last theorem means that nilpotent product $(n)\{A_{1},\dots,A_{s}\}$ has finite palindromic width with respect to the generating set $\bigcup\limits_{i=1}^{s}X_{i}$ if and only if all palindromic widths ${\rm pw}(A_{i}, X_{i}),$  $i=1,\dots, s,$ are finite.

\section{On Commutator and Palindromic Widths}

\begin{definition}
Let $G$ be a group. We say that a map $f:G \to \Z$ is a quasi-homomorphism if there is some constant $c$ such that for every $g, h \in G$, we have
$$|f(gh)-f(g)-f(h)|\leq c,$$
\end{definition}

Define a function $tr: \Z \to \{-1, 0, 1\}$ by
$$
tr(m) =
\left\{\begin{array}{lll}
-1 & ~\mbox{if}~m \equiv -1 \mod 3, \\
0 & ~\mbox{if}~~m \equiv 0 ~ ~\mod 3, \\
1 & ~ \mbox{if}~~ m \equiv 1 ~ ~ \mod 3.
\end{array}
\right.
$$
The following is easy to prove.
\begin{lemma}\label{lem1}
If $m, n \in \Z$, then
\begin{enumerate}
\item $tr(m)+tr(n)-3 \leq tr(m+n) \leq tr(m) + tr(n) +3$.
\item $tr(-m)=-tr(m)$.
\end{enumerate}
\end{lemma}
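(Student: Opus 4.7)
The plan is to prove both parts by an elementary case analysis based on the residues of $m$ and $n$ modulo $3$, since $tr$ depends only on these residues. The lemma is essentially a bookkeeping statement that packages $tr$ as a quasi-homomorphism from $(\Z,+)$ to $\Z$ with defect $3$, and a short direct argument suffices in each part.

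For part (1), I would first observe that the bound is immediate from the range of $tr$: since $tr$ takes only the values in $\{-1,0,1\}$, we have $tr(m)+tr(n) \in \{-2,-1,0,1,2\}$ while $tr(m+n) \in \{-1,0,1\}$, so
\[
tr(m+n) - tr(m) - tr(n) \in [-1-2,\ 1-(-2)] = [-3,3],
\]
which rearranges exactly to the two-sided inequality claimed. Strictly speaking, no modular case analysis is needed for the inequality itself; one only needs the range to be contained in $\{-1,0,1\}$. For completeness I would then note that the bounds are sharp by testing $m=n=1$ (where $m+n=2\equiv -1 \pmod 3$, so $tr(m+n)=-1$ and the left inequality is an equality) and $m=n=-1$ (where $m+n=-2\equiv 1 \pmod 3$, so $tr(m+n)=1$ and the right inequality is an equality).

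For part (2), I would enumerate the three residue classes of $m$ modulo $3$. If $m\equiv 0$, then $-m\equiv 0$, so $tr(-m)=0=-tr(m)$. If $m\equiv 1$, then $-m\equiv -1$, so $tr(-m)=-1=-tr(m)$. If $m\equiv -1$, then $-m\equiv 1$, so $tr(-m)=1=-tr(m)$. In each case the identity $tr(-m)=-tr(m)$ holds.

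There is no real obstacle here; the content of the lemma is not conceptually deep. Its role is to isolate, once and for all, the quasi-homomorphism estimate for $tr$ so that it can be invoked cleanly when assembling the quasi-homomorphism on $F_n\wr K$ that will be used in the sequel to show the commutator width of $F_n\wr K$ is infinite.
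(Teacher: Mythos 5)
Your proof is correct. The paper itself offers no argument for this lemma (it is dismissed with ``The following is easy to prove''), so there is nothing to compare against; your observation that part (1) needs only the fact that $tr$ takes values in $\{-1,0,1\}$ --- so that $tr(m+n)-tr(m)-tr(n)$ automatically lies in $[-3,3]$ --- is the cleanest possible justification, and the residue-by-residue check for part (2) is exactly what is needed.
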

Let  $F_n=\langle x_1, \ldots, x_n \rangle$, $n \geq 2$ be the free group of rank $n$.
Let $w=x_{i_1}^{\alpha_1} x_{i_2}^{\alpha_2} \ldots x_{i_t}^{\alpha_t}$ be a reduced word in $\F$. Define $ql: \F \to \Z$ by
$$ql(w)=\sum_{i=1}^t tr(\alpha_i).$$
The following properties of $ql$ follows from the work of Bardakov \cite{B} who uses ideas of Rhemtulla \cite{R} to prove these properties.
\begin{lemma}\label{ql1}
For $f, g \in \F$,
\begin{itemize}
\item[(i)] $ql(f) + ql(g)-3 \leq ql(fg) \leq ql(f) + ql(g) +3$.
\item[(ii)] $ql(g^{-1})=-ql(g)$.
\item[(iii)] $-9 \leq ql([f,g]) \leq 9$.
\end{itemize}
\begin{proof}
(i) Follows from (1) of \lemref{lem1}. We prove (ii).

Let $g=x_{i_1}^{\alpha_1} x_{i_2}^{\alpha_2} \ldots x_{i_t}^{\alpha_t}$. Then
$$ql(g)=tr(\alpha_1)+ tr(\alpha_2)+ \dots+ tr(\alpha_t).$$
Take $g^{-1} =x_{i_t}^{-\alpha_t}x_{i_{t-1}}^{\alpha_{t-1}} \ldots x_{i_1}^{\alpha_1}$. Then
$$ql(g^{-1})=tr(-\alpha_t)+\dots+tr(-\alpha_1)=-ql(g).$$

(iii) follows from (i) and (ii).
\end{proof}
\end{lemma}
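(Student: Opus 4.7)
The plan is to establish (ii), then (i), and finally deduce (iii) as a formal consequence of the first two.

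Part (ii) I would dispatch as a direct calculation. Writing $g$ in reduced form as $x_{i_1}^{\alpha_1}\cdots x_{i_t}^{\alpha_t}$, its inverse is $g^{-1}=x_{i_t}^{-\alpha_t}\cdots x_{i_1}^{-\alpha_1}$, and this is still a reduced expression because reversing the syllables preserves the adjacency condition $i_k\ne i_{k+1}$. The definition of $ql$ together with the identity $tr(-m)=-tr(m)$ from \lemref{lem1} then yields $ql(g^{-1})=-ql(g)$ termwise.

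For (i), my approach is to analyze the free reduction of the concatenation $f\cdot g$ of two reduced words. The reduced form of $fg$ is obtained by matching syllables at the junction in a very constrained way: some number of syllables at the end of $f$ cancel pairwise with syllables of opposite exponent at the beginning of $g$, and, possibly, one additional pair of syllables on the same generator whose exponents sum to a nonzero integer is merged into a single syllable. Once such a merge happens, or once two syllables on different generators meet, the reduction halts. Two observations then yield the inequality. First, any canceled pair of syllables with exponents $\alpha$ and $-\alpha$ contributes $tr(\alpha)+tr(-\alpha)=0$ to $ql(f)+ql(g)$, so the cancellations cause no net change. Second, the possible merge replaces the two contributions $tr(\alpha)+tr(\beta)$ by the single contribution $tr(\alpha+\beta)$, and part (1) of \lemref{lem1} bounds this difference by $3$ in absolute value. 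Combining gives $|ql(fg)-ql(f)-ql(g)|\le 3$.

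Part (iii) then follows formally. Writing $[f,g]=f^{-1}g^{-1}fg$ as a product of four elements, I would apply (i) three times, once for each intermediate product, to obtain $|ql([f,g])-ql(f^{-1})-ql(g^{-1})-ql(f)-ql(g)|\le 9$, and then invoke (ii) to see that the four terms on the right sum to zero. The main obstacle I foresee is the bookkeeping step in (i)---specifically justifying that the reduction of $fg$ really has the simple structure I described, a symmetric block of cancellations followed by at most one merge. This is a standard fact about free reduction at a word boundary, but writing it down carefully requires noting that cancellation propagates only inward from the junction and stops as soon as a merged nonzero syllable or a pair of syllables on different generators appears; once this structural claim is in hand, the inequalities reduce cleanly to \lemref{lem1}.
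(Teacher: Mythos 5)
Your proposal is correct and follows essentially the same route as the paper: a termwise computation for (ii) using $tr(-m)=-tr(m)$, the bound from Lemma~\ref{lem1}(1) for (i), and three applications of (i) plus (ii) for (iii). You in fact supply more detail than the paper does for (i) --- the paper simply asserts it follows from Lemma~\ref{lem1}(1), whereas you correctly spell out the cancellation-then-at-most-one-merge structure of free reduction at the junction that makes this work.
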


\begin{theorem}\label{cw}
Let $K$ be a finite group and $\F$ be a free group of rank $n \geq 2$. The commutator width of $F_n \wr K$ is infinite.
\end{theorem}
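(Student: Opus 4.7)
The plan is to promote Bardakov's quasi-character $ql$ on $F_n$ to a quasi-homomorphism $Q$ on $F_n \wr K$ that is bounded on single commutators but unbounded on $(F_n \wr K)'$; this is exactly the scheme used in \cite{R, B, BST} to show that $F_n$ itself has infinite commutator width.

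First, I would define $Q : F_n \wr K \to \Z$ coordinatewise. Writing an element of $F_n \wr K = F_n^K \rtimes K$ as $(f; k)$ with $f = (f_h)_{h \in K} \in F_n^K$, set
$$
Q(f; k) = \sum_{h \in K} ql(f_h).
$$
The wreath-product multiplication $(f; k)(g; l) = (f \cdot {}^k g;\, k l)$, where $({}^k g)_h = g_{k^{-1} h}$, makes $Q$ a quasi-homomorphism of defect at most $3|K|$: applying Lemma \ref{ql1}(i) in each coordinate and re-indexing the sum in the second factor gives
$$
|Q(xy) - Q(x) - Q(y)| \leq 3 |K|.
$$
The inversion formula $(f; k)^{-1} = ({}^{k^{-1}} f^{-1};\, k^{-1})$ together with Lemma \ref{ql1}(ii) yields $Q(x^{-1}) = -Q(x)$ after a change of index. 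Expanding $[x, y] = x^{-1} y^{-1} x y$ and iterating these two estimates, I obtain a constant $C = C(|K|)$ with $|Q([x, y])| \leq C$ for every commutator, and hence $|Q(W)| \leq C' N$ for every $W \in (F_n \wr K)'$ expressed as a product of $N$ commutators.

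Next, I would exhibit a sequence $W_m \in (F_n \wr K)'$ with $|Q(W_m)| \to \infty$, which will force $l_{\mathcal{C}}(W_m) \to \infty$. For this I would invoke the key fact underlying the classical proofs that ${\rm cw}(F_n) = \infty$ in \cite{R, B}: the restriction of $ql$ to $[F_n, F_n]$ is already unbounded, so one can choose $w_m \in [F_n, F_n]$ with $ql(w_m) \to \infty$. Embedding each $w_m$ at one fixed coordinate, $W_m = ((w_m, 1, \ldots, 1);\, 1_K)$, lands in $(F_n^K)' \subseteq (F_n \wr K)'$, and by construction $Q(W_m) = ql(w_m) \to \infty$. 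The bound $|Q(W_m)| \leq C' \cdot l_{\mathcal{C}}(W_m)$ then forces $l_{\mathcal{C}}(W_m) \to \infty$, so ${\rm cw}(F_n \wr K) = \infty$.

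The main obstacle is the availability of the sequence $w_m$, since naive candidates like $[x_1, x_2]^m$ or $[x_1^m, x_2]$ collapse to $ql = 0$; however, this is exactly the nontrivial content of Bardakov's proof in \cite{B}, and I would simply import it. The work that is genuinely new here is then essentially bookkeeping: extending $ql$ across the semidirect-product structure so that the defect estimate survives the twist by the $K$-action and the coordinate-wise summation.
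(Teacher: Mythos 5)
Your proposal is correct and follows essentially the same route as the paper: the coordinatewise sum of $ql$ over the base group $F_n^K$ is exactly the quasi-homomorphism $\Delta$ used there, with the same defect estimates and the same conclusion via boundedness on products of $m$ commutators. The only difference is that where you import the existence of $w_m \in [F_n,F_n]$ with $ql(w_m)\to\infty$ from \cite{R,B}, the paper makes this self-contained by exhibiting the explicit sequence $a_j = x_2^{-3j}x_1^{-3j}(x_2x_1)^{3j}$ with $ql(a_j)=6j$.
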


Let $G=F_n \wr K$.  Let  $K=\{k_1, \ldots, k_l\}$. To prove the above theorem, we shall use $ql$ to define a quasi-homomorphism $\Delta: G \to \Z$ on $G$. Note that $G=P \rtimes K$, where $P$ is the direct product $P=\prod_{i=1}^l F_{k_i}$, each $F_{k_i}$ is an isomorphic copy of $\F$. Thus, every element $g \in G$ has a form
$g=(f_{k_1}, f_{k_2}, \ldots, f_{k_l}) k$, where $k \in K$, $f_{k_i} \in \F$. The group $K$ acts on $P$ by the natural action:
$(f_{k_1}, \ldots, f_{k_l})^k=(f_{k^{-1}k_1}, \ldots, f_{k^{-1}k_l})$, and this further induces an action of the symmetric group $S_l$ on $P$:
$(f_{k^{-1}k_1}, \ldots, f_{k^{-1}k_l})=(f_{k_{\sigma(1)}}, \ldots, f_{k_{\sigma(l)}})$, for some $\sigma \in S_l$.

Define a quasi-homomorphism $\Delta: G \to \Z$ by
$$\hbox{For }g=(f_{k_1}, f_{k_2}, \ldots, f_{k_l}) k \in G \hbox{ let, } \Delta(g)=\sum_{i=1}^l ql(f_{k_i}).$$
It follows from the above lemmas using standard methods that:
\begin{lemma}\label{gh2} Let $g, h \in G$. Then the following holds.
\begin{enumerate}
\item $|\Delta (gh)- \Delta(g) - \Delta (h)|\leq 3l$.
\item $|\Delta(g) + \Delta(g^{-1} )|\leq 3l$.
\item $|\Delta([g, h])|\leq 15l$.
\end{enumerate}
\end{lemma}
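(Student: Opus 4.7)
The plan is to reduce all three inequalities to the corresponding properties of $ql$ (Lemma \ref{ql1}) applied coordinatewise on the base group $P = \prod_{i=1}^{l} F_{k_i}$, picking up a factor of $l$ from summing over the $l$ copies of $F_n$. The one substantive observation needed is that the action of $K$ on $P$ permutes the $l$ coordinates, so the sum $\Delta(g) = \sum_i ql(f_{k_i})$ is invariant under that action; once this is explicit, the rest is bookkeeping.

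For (1), I would write $g = (f_{k_1}, \dots, f_{k_l})\, k$ and $h = (f'_{k_1}, \dots, f'_{k_l})\, k'$. The wreath product multiplication gives
\[
gh = \bigl(f_{k_1} f'_{k_{\sigma(1)}},\, \dots,\, f_{k_l} f'_{k_{\sigma(l)}}\bigr)\, kk',
\]
where $\sigma \in S_l$ is the coordinate permutation induced by $k$, as described just before the definition of $\Delta$. Applying Lemma \ref{ql1}(i) in each coordinate gives $|ql(f_{k_i} f'_{k_{\sigma(i)}}) - ql(f_{k_i}) - ql(f'_{k_{\sigma(i)}})| \leq 3$; summing over $i = 1, \dots, l$ and using that $\sigma$ is a bijection (so $\sum_i ql(f'_{k_{\sigma(i)}}) = \sum_i ql(f'_{k_i}) = \Delta(h)$) yields (1).

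For (2), the cleanest route uses (1) together with the observation $\Delta(1_G) = 0$: applying (1) to the product $g \cdot g^{-1} = 1_G$ gives $|\Delta(g) + \Delta(g^{-1})| \leq 3l$ directly. (Alternatively, Lemma \ref{ql1}(ii) applied coordinatewise yields the sharper identity $\Delta(g^{-1}) = -\Delta(g)$, but this is not needed for the stated bound.) For (3), three successive applications of (1) to the four-fold product $g^{-1} h^{-1} g h$ yield
\[
\bigl|\Delta([g,h]) - \Delta(g^{-1}) - \Delta(h^{-1}) - \Delta(g) - \Delta(h)\bigr| \leq 9l,
\]
and regrouping as $\bigl(\Delta(g) + \Delta(g^{-1})\bigr) + \bigl(\Delta(h) + \Delta(h^{-1})\bigr)$ and applying (2) to each pair contributes at most $6l$ more, for the asserted bound of $15l$.

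I expect no real obstacle beyond carefully tracking indices in the wreath product multiplication; the proof is essentially a coordinatewise transfer of Lemma \ref{ql1} from $F_n$ to $F_n \wr K$.
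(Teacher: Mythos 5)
Your proof is correct and follows essentially the same route as the paper: coordinatewise application of Lemma~\ref{ql1}(i) for (1), setting $h=g^{-1}$ with $\Delta(1)=0$ for (2), and combining (1) and (2) across the four-fold product for (3). Your explicit use of the bijectivity of the coordinate permutation to identify $\sum_i ql(f'_{k_{\sigma(i)}})$ with $\Delta(h)$ is in fact slightly more careful than the paper's write-up, which elides that reindexing step.
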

\begin{proof}
Let $g=(f_{k_1}, \dots, f_{k_l})k$, $h=(f'_{k_1}, \dots, f'_{k_l})k'$. Then
$$gh=(f_{k_1}f'_{k k_1}, \dots, f_{k_l} f'_{k k_l})kk', $$
and we have,
\begin{eqnarray*}
|\Delta(gh)-\Delta(g)-\Delta(h)| & = & | \sum_{i=1}^l ql(f_{k_i} f'_{k k_i} )- \sum_{i=1}^l ql(f_{k_i})-\sum_{i=1}^l ql(f'_{k_i}) | \\
&\leq& \sum_{i=1}^l | ql(f_{k_i} f'_{k k_i} )-ql(f_{k_i})-ql(f'_{k_i})|\\
&\leq & 3l ~~~\hbox{(by \lemref{ql1}).}
\end{eqnarray*}

For (2), take $h=g^{-1}$ in (1). Noting that $\Delta(1)=0$ the inequality follows.

(3) follows from (1) and (2):
$$|\Delta([g, h])-\Delta(g^{-1}) -\Delta(h^{-1} )-\Delta(g)-\Delta(h)|\leq 9l.$$
This implies, $|\Delta([g, h])|\leq 15l$.
\end{proof}
\begin{cor}\label{cor1}
If $g$ in $G$ is a product of $m$ commutators, then $|\Delta(g)|\leq 3l(6m-1).$
\end{cor}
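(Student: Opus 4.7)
The plan is to prove Corollary \ref{cor1} by induction on $m$, combining the two halves of \lemref{gh2}: part (1) controls how $\Delta$ accumulates errors on a product, and part (3) bounds $|\Delta|$ on each individual commutator factor.

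More concretely, I would first iterate part (1) of \lemref{gh2}. Writing $g=c_1 c_2 \cdots c_m$ as a product of $m$ commutators $c_i$, a straightforward induction on $m$ yields
\[
\Bigl|\Delta(g) - \sum_{i=1}^{m} \Delta(c_i)\Bigr| \leq 3l(m-1),
\]
since each time we peel off one factor on the right we incur an additive error of at most $3l$. Then by the triangle inequality,
\[
|\Delta(g)| \leq \sum_{i=1}^{m} |\Delta(c_i)| + 3l(m-1).
\]
Next, I would apply part (3) of \lemref{gh2} to each commutator $c_i$, which gives $|\Delta(c_i)| \leq 15l$. Substituting yields
\[
|\Delta(g)| \leq 15l \cdot m + 3l(m-1) = 18lm - 3l = 3l(6m-1),
\]
which is the desired bound.

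There is essentially no obstacle here; the only point requiring mild care is the inductive step in the first display, where one must verify that each grouping of factors accumulates exactly one application of the quasi-homomorphism inequality, so that the total error after combining $m$ commutators is at most $(m-1) \cdot 3l$ rather than something larger. Once this bookkeeping is done cleanly, the rest is immediate from \lemref{gh2}.
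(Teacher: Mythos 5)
Your proposal is correct and follows exactly the paper's argument: the paper's proof is the one-line computation $|\Delta(g)|\leq 15lm+3l(m-1)=3l(6m-1)$ obtained by ``repeated application'' of \lemref{gh2}, and your induction merely fills in the bookkeeping for that repeated application. No further comment is needed.
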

\begin{proof}
By repeated application of the above lemma,

\medskip
$|\Delta(g)|\leq 15lm+3l(m-1)=3l(6m-1)$.
\end{proof}
\subsection{Proof of \thmref{cw}}
 We consider the sequence $q_j=(a_j, 1, \ldots, 1)$ in $G$, where $a_j=x_2^{-3 j} x_1^{-3j} (x_2 x_1)^{3j} \in  F_{k_1} \subset G$. Then
$$\Delta(q_j)=ql(x_2^{-3 j} x_1^{-3j} (x_2 x_1)^{3j})=6j.$$
So,
$$\Delta(q_1) < \Delta(q_2) < \ldots <\Delta(q_j) < \ldots$$
Thus the sequence of elements ${q_j}$ is unbounded on the $\Delta$-values. On the other hand, by \corref{cor1}, every element of $g$ that is a product of bounded number of commutators, must have a finite $\Delta$-value. This shows that $G$ can not have a bounded  commutator width.

\subsection{Palindromic width of $F_2 \wr S_3$} Let $S_3$ be the symmetric group of three symbols. It is isomorphic to the Dihedral group
$$S_3=\langle s_1, s_2 \ | \ s_1^2=s_2^2=(s_1 s_2)^3=1 \rangle.$$
Following Fink \cite{f1}, we add a new generator $c=s_1 s_2$  to the generating set of $S_3$:
$$S_3=\langle s_1, s_2, c \ | \ s_1^2=s_2^2=(s_1 s_2)^3=1, ~ c=s_1 s_2 \rangle.$$
Let $G=F_2 \wr S_3$. Express an element $f \in G$ in the form:
$$f=(f_1, f_2, f_3, f_4, f_5, f_6)s,$$
where $f_i=x^{\alpha_i} y^{\beta_i} g_i, ~ \alpha_i, \beta_i \in \Z, ~ g_i \in F_2', ~ s \in S_3$.  Let $F_2^6$ denote the direct product of six copies of $F_2$.
\begin{lemma}
 Every element of $F_{2}^{6}$ can be written as a product of 18 palindromes with respect to the generating set $\{x, y, s_{1}, s_{2}, c\}$.
\end{lemma}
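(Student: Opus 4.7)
The plan is to prove the stronger single-coordinate claim: for each position $q \in S_3$ and each $f \in F_2$, the element of $F_2 \wr S_3$ that carries $f$ at position $q$, $1$'s at the other five positions, and trivial $S_3$-component can be written as a product of at most $3$ palindromes in $\{x,y,s_1,s_2,c\}^{\pm 1}$. Since every element of $F_2^6$ is a product of six such ``single-coordinate'' pieces, multiplying the six decompositions will give the claimed bound of $6\cdot 3 = 18$ palindromes.

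The first step is $S_3$-bookkeeping: one needs a palindromic word $p$ in $\{s_1,s_2,c\}^{\pm 1}$ that ``transports'' to the correct position $q$ and respects the palindromic structure. If $p$ is a palindrome and represents an involution of $S_3$ (that is, $p^{2}=1$ as a group element), then $p\,w\,p$ is itself a word-palindrome and represents $w$ placed at position $q$ with trivial $S_3$-component, for any palindrome $w \in \{x,y\}^{\ast}$. This handles the four involutions $1,\, s_1,\, s_2,\, s_1 s_2 s_1 = s_2 s_1 s_2$ of $S_3$. The remaining two positions $c$ and $c^2$ are reached through the length-three palindromic words $s_1\,c^{-1}\,s_1$ and $s_1\,c\,s_1$, which evaluate to $c$ and $c^{-1}$ in $S_3$; one pairs them into products of two palindromic factors whose $S_3$-components multiply to $1$, so that the resulting product lives in the base.

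The core step is then to show that an arbitrary $f \in F_2$, once placed at the prescribed position $q$, fits into just $3$ palindromes. I would use the decomposition $f = x^{\alpha} y^{\beta} g$ with $g \in [F_2,F_2]$: the abelianised part contributes two palindromes via the words $x^{\alpha}$ and $y^{\beta}$ sandwiched between palindromic $S_3$-transporters, and the commutator part $g$ contributes the third. The main obstacle is this last step, since $[F_2,F_2]$ has infinite palindromic width with respect to $\{x,y\}$ alone; the extra letters $s_1, s_2$ from $S_3$ must act as ``hinges'' that fold a commutator word into a word-palindrome of the form $u\,l\,\bar{u}$ (with $\bar{u}$ the reverse of $u$ and $l$ a single central letter), while the $S_3$-bookkeeping is arranged so that the palindrome represents $g$ at position $q$ in the wreath product. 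Once this single-coordinate bound of $3$ is established by the folding trick, concatenating six coordinate-wise decompositions yields the required $18$-palindrome presentation of any element of $F_2^6$.
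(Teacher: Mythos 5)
Your overall architecture is sound and in fact lands on the same count as the paper's proof ($12$ palindromes for the abelianised parts plus $6$ for the parts in $(F_2')^6$, regrouped coordinate-by-coordinate as $6\times 3$), and the idea of transporting a word to a given coordinate by sandwiching it between palindromic words over $\{s_1,s_2,c\}$ is exactly the right one. But there is a genuine gap at the decisive step: you never actually produce the single palindrome representing an arbitrary $g\in F_2'$ at a prescribed coordinate. You name it (``the folding trick'', a word of the form $u\,l\,\bar u$) and observe correctly that it is the main obstacle, since $F_2'$ has infinite palindromic width over $\{x,y\}$ alone; but the lemma \emph{is} this construction, and asserting that the $S_3$-letters ``must act as hinges'' is not a proof. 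The paper's mechanism is concrete: take $r=c(s_1s_2)^2$, which is the identity in $S_3$ while its reverse $\bar r=(s_2s_1)^2c$ is not; write $w=x^{a_1}y^{b_1}\cdots x^{a_l}y^{b_l}$ (with $\sum a_i=\sum b_i=0$, as $w\in F_2'$) in the padded form $W=rx^{a_1}r^{-1}y^{b_1}\cdots rx^{a_l}r^{-1}y^{b_l}$, which still represents $w$ at the original coordinate; then the reversed word $\bar W$ evaluates to $1$ in $F_2\wr S_3$, because the reversed transporters scatter the $x$- and $y$-blocks into coordinates where the zero exponent sums force total cancellation. Hence $W\bar W$ is a word-palindrome equal to $w$ in the group --- note the shape is $W\bar W$ with $\bar W=1$, not $u\,l\,\bar u$, and it is not obvious your proposed shape can be realised.

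A secondary problem is the budget at the two non-involutive positions. Your transporters $s_1c^{-1}s_1$ and $s_1cs_1$ are indeed palindromes evaluating to $c$ and $c^{-1}$, but conjugating by a palindrome $p$ in the form $p\,x^{\alpha}\,p$ leaves the residual $S_3$-component $p^2$, which is nontrivial when $p$ is not an involution; ``pairing two palindromic factors so the $S_3$-components multiply to $1$'' then costs an extra palindrome per correction, and it is not clear you stay within $3$ palindromes for the coordinates indexed by $c$ and $c^{-1}$. The paper sidesteps this by decomposing globally into $(x^{\alpha_1},\dots,x^{\alpha_6})(y^{\beta_1},\dots,y^{\beta_6})$ times an element of $(F_2')^6$, rather than coordinate-by-coordinate, so the bookkeeping of residual $S_3$-components never accumulates inside a single coordinate's budget. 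To repair your version you would need either to exhibit the folding explicitly (e.g.\ by the $W\bar W$ device above) and to verify the $3$-palindrome bound at all six positions, or to revert to the global decomposition.
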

\begin{proof}
Note that the element
$$(x^{\alpha_1} y^{\beta_1}, x^{\alpha_2} y^{\beta_2}, \ldots, x^{\alpha_6} y^{\beta_6})
=(x^{\alpha_1}, x^{\alpha_2}, \ldots, x^{\alpha_6})(y^{\beta_1}, y^{\beta_2}, \ldots, y^{\beta_6}),$$
is a product of twelve palindromes.

First, we consider the case $g$ is a commutator to demonstrate the general method.
Let $g=([x, y], 1, 1, 1, 1, 1)$. Let $r=c(s_1s_2)^2$. Then $r=1$ in $S_3$. But $\bar r =(s_2 s_1)^2 c\neq 1$.
We can write $[x, y]=rx^{-1} r^{-1} y^{-1} r x r^{-1} y $. Note that
$\overline{[x, y]}= y \bar r^{-1} x \bar r y^{-1} \bar r^{-1} x^{-1} \bar r$. Then
\begin{eqnarray*}\bar g&=&(y \bar r^{-1} x \bar r y^{-1} \bar r^{-1} x^{-1} \bar r, 1, 1, 1, 1, 1)\\
&=&(yy^{-1}, 1,1, 1,1,1)(1,1,xx^{-1},1,1,1)=1.\end{eqnarray*}
Then $g=g \bar g$ is a palindrome.

Consider $w \in F_2 '$. Then $w=x^{a_1} y^{b_1} \ldots x^{a_l} y^{b_l}, \sum a_i=\sum b_i=0$.
We re-write $w$ as $w=rx^{a_1} r^{-1} y^{b_1} r x^{a_2} r^{-1} y^{b_2} \ldots r x^{a_l}r^{-1} y^{b_l}$. Then $\bar w=1$.

Consider $g=(g_1, \ldots, g_6) \in (F_2')^6$. Using the above expression, arbitrary element $g \in (F_2')^6$ can be written such that $\bar g=1$. Hence we can write
$$g=(g_1, g_2, g_3, g_4, g_5, g_6) (\bar g_1, \bar g_1, \bar g_3, \bar g_4, \bar g_5, \bar g_6),$$ where each $\bar g_i=1$. Consequently, $g$ is a product of six palindromes.

Thus an element $(f_1, \cdots, f_6) \in F_2^6$ can be written as a product of 18 palindromes with respect to the generating set $\{x, y, s_{1}, s_{2}, c\}$.
\end{proof}

Further, $$S_{3}=\{1, s_{1}, s_{2}, s_{1}s_{2}, s_{2}s_{1}, s_{1}s_{2}s_{1}\}=\{1, s_{1}, s_{2}, c, c^{-1}, cs_{1}\}$$ and so every element of $S_{3}$ can be written as a product of not more then two palindromes. So, every element of $G$ can be written as a product of $\leq 20$ palindromes.
From Fink's result \cite[Theorem 4.7]{f1}, it follows that $\pw(F_2 \wr S_3, S) \leq 40$. So, our construction above improves the bound of the palindromic width of $F_2 \wr S_3$.

\end{document}